\newcommand{\sidenote}[1]{\marginpar[\raggedleft\tiny #1]{\raggedright\tiny #1}}
\newcommand{\sidenote}[1]{}
\newcommand{\torus}{\mathbb{T}}
\newcommand{\del}{\partial}
\newcommand{\lap}{\triangle}
\newcommand{\inv}{^{-1}}
\newcommand{\transpose}{^*}
\newcommand{\grad}{\nabla}
\newcommand{\gradt}{\grad\transpose}
\renewcommand{\epsilon}{\varepsilon}
\renewcommand{\leq}{\leqslant}
\renewcommand{\geq}{\geqslant}
\newcommand{\E}{\mathbb{E}}
\newcommand{\F}{\mathcal{F}}
\newcommand{\lhp}{\boldsymbol{\mathrm{P}}}
\newcommand{\holderspace}[2]{\ensuremath{C^{\ifx0#1{#2}\else{#1,#2}\fi}}}
\newif\iftextstyle
\everydisplay\expandafter{\the\everydisplay\textstylefalse}
\newcommand{\abs}[1]{\iftextstyle\lvert#1\rvert\else\left\lvert#1\right\rvert\fi}
\newcommand{\norm}[1]{\iftextstyle\lVert#1\rVert\else\left\lVert#1\right\rVert\fi}
\newcommand{\holdernorm}[3]{\norm{#1}_{\ifx0#2{#3}\else{#2,#3}\fi}}
\newcommand{\lpnorm}[2]{\norm{#1}_{\smash{L^{\!#2}_{\vphantom{h}}}\vphantom{L^{\!#2}}}}
\numberwithin{equation}{section}
\newtheorem{theorem}{Theorem}[section]
\newtheorem{lemma}[theorem]{Lemma}
\newtheorem*{theorem*}{Theorem}
\newtheorem*{lemma*}{Lemma}
\newtheorem*{proposition*}{Proposition}
\newtheorem*{corollary*}{Corollary}
\theoremstyle{definition}
\theoremstyle{remark}
\newtheorem*{remark*}{Remark}
\begin{document}
\title[A particle system for Navier-Stokes]{A Stochastic Lagrangian particle system for the Navier-Stokes equations}
\author{Alexei Novikov}
\email{anovikov@math.psu.edu}
\author{Karim Shikh Khalil}
\email{krs5562@psu.edu}
\address{Pennsylvania State University}
\subjclass[2000]{%
  Primary
  60H15; 
  Secondary
  65C35, 
  35L67. 
}
\keywords{}
\begin{abstract}
 This work is based on a formulation of the incompressible Navier-Stokes equations
  developed by P. Constantin and G.Iyer, where the velocity
  field of a viscous incompressible fluid is written as the expected
  value of a stochastic process. If we take $N$ copies of
  the above process (each based on independent Wiener processes), and
  replace the expected value with the empirical mean, then it was shown 
  that the particle system for the Navier-Stokes equations does not dissipate all its energy as $t \to \infty$. 
  In contrast to the true (unforced) Navier-Stokes equations, which dissipate
all of its energy as $t \to \infty$. The objective of this short note is to describe  a resetting procedure that removes this deficiency. We prove that  if we repeat this resetting procedure often enough, then the new particle system for the Navier-Stokes equations dissipates all its energy.
\end{abstract}
\maketitle
\begingroup%
  \renewcommand\thefootnote{}%
  \footnote{%
    This material is based upon work partially supported by
        the National Science Foundation   DMS-1515187, and the Penn State Summer Research Experience for Undergraduates (REU) program.
  }
  \addtocounter{footnote}{-1}%
\endgroup%

\section{Introduction}

The Navier-Stokes equations 
\begin{align}
 \label{NS1} \partial_t u + (u \cdot \nabla)&u - \nu \Delta u+ \nabla p= 0,\\ 
\label{NS2} \nabla &\cdot u=0  
\end{align}
 describe the evolution of a velocity field $u$ of an incompressible fluid with kinematic
viscosity $ \nu >0 $, and where $p$ denotes the pressure. When the viscosity vanishes, we end up with the incompressible Euler equation:
\begin{align}
 \partial_t u + (u \cdot \nabla)&u+ \nabla p= 0,\\\label{E}
\nabla &\cdot u=0  
\end{align}
which describe the motion of an ideal incompressible fluid. 
 The mathematical theory of these equations have been extensively studied and the existence of regular solutions is still an open problem in  PDE theory~\cites{C,F}. We are interested in developing probabilistic techniques, that could help  solve this problem.

Probabilistic representations of solutions of partial differential equations as the expected value of functionals of stochastic processes date back to the work of Einstein, Feynman, Kac, 
and Kolmogorov in physics and mathematics. The 
Feynman-Kac formula is the most well-known example, which
has provided a link between linear parabolic partial differential equations and probability theory~\cite{KR, P}.
These stochastic representation methods have provided in some cases  tools to show existence and uniqueness of solutions to partial differential equations.
 For nonlinear  partial differential equations the earliest work was done by McKean~\cite{Mc}, where a probabilistic representation of the solution for the nonlinear Kolmogorov-Petrovsky-Piskunov equation was given. 
The theory for nonlinear partial differential equations, however, is far less understood.

The questions studied in this work are motivated by the development of a probabilistic formulation of~\eqref{NS1}-\eqref{NS2} proposed by P.Constantin and G.Iyer in~\cite{bblSLNS}. There, the Navier-Stokes equation 
is interpreted as the average of a 
stochastic perturbation of the Euler equation.  More specifically, a Weber formula is used to represent  the velocity  of the inviscid equation in terms of the particle trajectories of the inviscid equation without including time derivatives, then 
the classical Lagrangian trajectories are replaced by stochastic flows. Averaging these stochastic trajectories gives us the solution of~\eqref{NS1}-\eqref{NS2}.

In~\cite{bblParticleMethod}  G.Iyer and J.Mattingly  used a Monte-Carlo method to approximate the described probabilistic formulation.
They took  $N$ independent copies of the Wiener process and replaced the expected value in the above 
formalism  with the empirical mean,  $\frac{1}{N}$ times the sum over these $N$ independent copies (we review the details of this method in Section~\ref{PSR}). By the law of the large numbers it is natural to expect that any average could be replaced by its empirical mean: 
$1/N\sum_{i=1}^N X_i \approx \mathbb{E}(X)$, where $X$ and $X_i$, $i=1,\dots, N$ are i.i.d. 
It turns out that a straightforward approximation of this average by its empirical mean is not adequate here. 
It was shown in ~\cite{bblParticleMethod}  that in two dimensions the $N$-particle system for the Navier-Stokes equations does not dissipate all its energy as $t \rightarrow \infty $.  In contrast, the solution of the 
corresponding Navier-Stokes equations 
does dissipate all of its energy as  $t \rightarrow \infty $. The goal of this paper is to alleviate this deficiency of the particle system developed in ~\cite{bblParticleMethod} by modifying it, so that the modified particle system dissipates all of its energy. 

Our modification is inspired by~\cite{GN}, where G.Iyer and A.Novikov studied a particle system formulation for the Burgers equation. There we can find another example, where the particle system does not fully mimic
properties of the corresponding PDE. Namely, the viscous Burgers equation does not develop shocks, but
the corresponding $N$-particle system shocks almost surely in finite time. In order to remove these shocks, the authors~\cite{GN} considered a resetting procedure that prevented their 
 formation.  In this paper we propose another resetting procedure, such that the particle system for the Navier-Stokes equations dissipate all its energy. 
 
The particle system in~\cite{bblParticleMethod} does not completely dissipate its energy, because, roughly speaking, the gradients of the 
velocities for the $N$ particles become decorrelated with time. We reinforce correlation of these velocities and their gradients by 
resetting, and this allows complete dissipation of energy. When the resetting condition holds, then the particle system dissipates its energy exponentially. Once 
the resetting condition is not satisfied, we reset the particle system, and restart the procedure again. 
In addition to the exponential dissipation of energy, the resetting procedure itself adds more dissipation each time we average our data.
Our theorem states that if we keep repeating the resetting procedure, the particle system will dissipate all its energy.

We now highlight briefly some other probabilistic formulations for  the Navier-Stokes equations and related work.  The initial work on probabilistic representation of the Navier-Stokes equations was done by 
A.Chorin. It was shown in~\cite{Ch} that 
in two dimensions vorticity evolves according to Fokker-Planck type equation. Using this A.Chorin gave a probabilistic representation for the vorticity using random walks and a particle limit, and then related it to the velocity vector using the
Biot-Savart Law. Using a different approach Y.Le Jan and A.Sznitman  in~\cite{LS} developed a probabilistic representation of Navier-Stokes equations, where they used a backward-in-time branching process in Fourier space to express the 
velocity of the three dimensional viscous fluid as the average of a stochastic process. These works  do not allow to develop a self contained well-posedness theory. Here our motivation is to develop new probabilistic 
techniques that may help us establish regularity theory for partial differential equations of Fluid Dynamics. 
The stochastic representation in ~\cite{bblSLNS} allows such theory, and we, therefore, chose to work with it.

Building on the work done by P.Constantin and G.Iyer in~\cite{bblSLNS}, X.Zhang in ~\cite{XZ} gave a stochastic representation for the backward incompressible Navier-Stokes equation using stochastic Lagrangian 
path. Later, linking the work of X.Zhang in ~\cite{XZ} with  P.Constantin and G.Iyer  in~\cite{bblSLNS},   F.Delbaen, J.Qiu, and S.Tang in ~\cite{DQT} developed a coupled forward-backward 
stochastic differential system  in the space of fields for the incompressible Navier-Stokes equation in the whole space. Using probabilistic tools, they were able to obtain local uniqueness results for the 
forward-backward stochastic differential system. In addition, they were able to show the existence of global solutions for the case with small Reynolds number or when the dimension is two. 
We also mention~\cite{CC}, where F.Cipriano and A.Cruzeiro using the Brownian motions on the group of homeomorphisms on the torus,  established  a stochastic variational principle for the two dimensional  
Navier-Stokes equations. Moreover,  A.Cruzeiro and E.Shamarova in ~\cite{CS} formulated a connection between the Navier-Stokes equations and a system of 
forward-backward stochastic differential equations  on the group of volume-preserving diffeomorphisms of a flat torus.
 
This paper is organized as follows. In Section ~\ref{PSR} we describe the stochastic-Lagrangian representation of the Navier-Stokes equations and construct the particle system. In addition, we explain 
the resetting scheme which will then be used in Section \ref{Energy}. In Section \ref{Energy}, the main section of this paper, we study the energy of the Navier-Stokes's particle system and we use the resetting procedure and show that by repeating it often enough, the particle system for the Navier-Stokes equations dissipates all its energy.  
\section{The particle system and the resetting} \label{PSR} 

In this section we construct the particle system for the Navier-Stokes
equations based on stochastic Lagrangian trajectories. We begin by
describing a stochastic Lagrangian formulation of the Navier-Stokes
equations~\cite{bblParticleMethod}.

Let $B_t$ be a standard $2$ or $3$-dimensional Brownian motion on a torus $\torus$, and let 
$u_0$ be some given periodic, and divergence free $\holderspace{k}{\alpha}$ initial
data. Let $\E$ denote the expected value with respect to the Wiener
measure and $\lhp$ be the Leray-Hodge projection onto divergence free
vector fields. Consider the system of equations
\begin{align}
  \ dX_t(x) &= u_t ( X_t(x) ) \, dt + \sqrt{2\nu}\,dB_t, \qquad X_0(x) = x,\label{eNS} \\
   u_t &= \E\, \lhp\left[ (\gradt Y_t) (u_0 \circ Y_t)
  \right],\qquad Y_t = X_t\inv.  \label{eDivFree} 
\end{align}

Above $X_t$ is the stochastic flow of diffeomorphisms on $\torus$, and we denote $Y_t = X_t\inv$ to be the spatial inverse of $X_t$ for any given time $t \geq 0$. We
denote $\gradt Y_t$ to be the transpose of the Jacobian of $Y_t$.

It was shown in~\cite{bblSLNS} that if the initial data  $u_0 \in \holderspace{k}{\alpha}$ is a deterministic divergence free vector field  with $k \geq 2$ and if we impose periodic boundary conditions on $u_t$ and $X_t - \mathbb{I}$, then for a short time the system \eqref{eNS}-\eqref{eDivFree}  is equivalent to the incompressible Navier-Stokes equations, that is,
$u_t$ satisfies 
 \begin{equation}
 \partial_t u_t + (u_t \cdot \nabla) u_t - \nu \Delta u_t+ \nabla p= 0,  \hspace{0.1 cm}
\nabla \cdot u_t=0.  
\end{equation}
In the case when the viscosity is zero $\nu = 0$, the equations~\eqref{eNS}-\eqref{eDivFree} are the Lagrangian formulation for the incompressible Euler equation developed in~\cite{CA}.
Note that we need the law of the entire flow $X$ in order to compute $u$, this is due to the fact that the term $\nabla^{*}Y$ is present in~\eqref{eDivFree}.
In order to approximate the system \eqref{eNS}-\eqref{eDivFree}, we replace the flow $X_t$ with $N$ different copies $X^{i}_t$ where each one is driven independently by  a Wiener process $B^i_t$, $i=1,2, \dots, N$. 
Fix a (sufficiently large) $N$, and we end up with the following approximate system
\begin{align}\label{PrtlSys}
  dX^{i}_t &= u_t \left( X^{i}_t \right) \, dt + \sqrt{2\nu}\,dB^i_t,\quad  Y^{i}_t = \left( X^{i}_t \right)\inv,\\
 u_t &= \frac{1}{N} \sum_{i=1}^N u^{i}_t, \hspace{0.3 cm}
 u^{i}_t = \lhp[ (\gradt Y^{i}_t) u_0\circ Y^{i}_t], \label{SytmAvg}
\end{align}
with initial data $X_0(x) = x$. We impose periodic boundary
conditions on the initial data $u_0$, and the displacement $\lambda^{i}_t(x) =
X^{I}_t(x) - x$.

The following Lemma describes the evolution of the velocity of the particle system~\eqref{SytmAvg}
 as SPDE.

\begin{lemma}\label{lemma2.1} (Lemma 4.2 in~\cite{bblParticleMethod}) 
Let $u^{i}_t = \lhp[ (\gradt Y^{i}_t) u_{0} \circ Y^{i}_t]$ be the $i^\text{th}$ summand in \eqref{SytmAvg}. Then $u^{i}_t$ satisfies the SPDE
\begin{equation}\ \label{Eqn1}
  d u^{i}_t + \left[ (u_t \cdot \grad) u^{i}_t - \nu \lap
    u^{i}_t + (\gradt u_t) u^{i}_t +  \grad p^{i}_t\right] 
  dt   + \sqrt{2\nu} \grad u^{i}_t dB^i_t = 0, 
\end{equation}
and $u_t$ satisfies the SPDE
\begin{equation}
  d u_t + \left[ (u_t \cdot \grad) u_t - \nu \lap u_t + \grad
    p_t \right] \, dt + \frac{\sqrt{2\nu}}{N} \sum_{i=1}^N \grad
  u^{i}_t dB^i_t = 0. 
\end{equation}
\end{lemma}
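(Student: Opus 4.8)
The plan is to treat $u_t$ in~\eqref{PrtlSys} as an externally prescribed drift: the equation we are after for one fixed $u^i_t$ sees $u_t$ only through the flow $X^i_t$, so there is no loss in forgetting that $u_t=\tfrac1N\sum_j u^j_t$ and in regarding $X^i_t$ simply as the stochastic flow of the SDE $dX^i_t=u_t(X^i_t)\,dt+\sqrt{2\nu}\,dB^i_t$. With that reading, $u^i_t=\lhp[(\gradt Y^i_t)(u_0\circ Y^i_t)]$ is exactly the Constantin--Iyer stochastic--Lagrangian velocity attached to this flow, and~\eqref{Eqn1} is precisely the evolution law for it derived in~\cite{bblSLNS} (with $u_t$ in the role of the advecting field). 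The equation for the average then follows by summing~\eqref{Eqn1} over $i$; the only point not already contained in~\cite{bblSLNS} is that, under the sum, the quadratic term $(\gradt u_t)u^i_t$ both closes and becomes a gradient.

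For completeness, here is how I would carry out the single--flow step. First, the SPDE for the back--to--labels map $Y^i_t=(X^i_t)\inv$: since $(g\circ Y^i_t)(X^i_t(a))=g(a)$ is constant in $t$ for every fixed smooth $g$, applying the It\^o--Wentzell formula to this identity (the only second--order correction being a Laplacian, coming from the quadratic variation of $\sqrt{2\nu}\,B^i_t$) shows that every component of $Y^i_t$ solves the stochastic transport--diffusion equation
\begin{equation*}
 dY^i_t+(u_t\cdot\grad)Y^i_t\,dt=\nu\lap Y^i_t\,dt-\sqrt{2\nu}\,\grad Y^i_t\,dB^i_t .
\end{equation*}
Next, because the noise in~\eqref{PrtlSys} is spatially constant, $\grad X^i_t$ carries no noise and obeys the pathwise ODE $\partial_t\grad X^i_t=(\grad u_t)(X^i_t)\,\grad X^i_t$; hence the Lagrangian representative $W^i_t(a):=w^i_t(X^i_t(a))$ of the unprojected Weber velocity $w^i_t:=(\gradt Y^i_t)(u_0\circ Y^i_t)$ is simply $W^i_t(a)=(\gradt X^i_t(a))\inv u_0(a)$ and therefore satisfies $\partial_t W^i_t=-(\gradt u_t)(X^i_t)\,W^i_t$, again noiselessly. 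Translating back to Eulerian variables via $w^i_t=W^i_t\circ Y^i_t$ and It\^o's formula, using the displayed $Y^i_t$--SPDE to evaluate the quadratic variation, produces the advection term $(u_t\cdot\grad)w^i_t$, the stretching term $(\gradt u_t)w^i_t$, the viscous term $-\nu\lap w^i_t$, and the martingale $\sqrt{2\nu}\,\grad w^i_t\,dB^i_t$, i.e.
\begin{equation*}
 dw^i_t+\bigl[(u_t\cdot\grad)w^i_t+(\gradt u_t)w^i_t-\nu\lap w^i_t\bigr]\,dt+\sqrt{2\nu}\,\grad w^i_t\,dB^i_t=0 .
\end{equation*}

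Applying the Leray--Hodge projection $\lhp$ to the last display yields~\eqref{Eqn1}. Since $\lhp$ is a constant--coefficient Fourier multiplier it commutes with $\partial_t$, with every spatial derivative (hence with $\lap$), and with each $dB^i_t$, so $\lhp w^i_t=u^i_t$ turns that display into~\eqref{Eqn1} except that $w^i_t$ still appears in the drift in place of $u^i_t$; writing $w^i_t-u^i_t=\grad q^i_t$ with $q^i_t=\lap\inv(\divergence w^i_t)$ and invoking the pointwise identity $(u_t\cdot\grad)\grad q^i_t+(\gradt u_t)\grad q^i_t=\grad\bigl((u_t\cdot\grad)q^i_t\bigr)$, the difference is a pure gradient, absorbed into $\grad p^i_t$. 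Finally, summing~\eqref{Eqn1} over $i$ and dividing by $N$: every term is linear in the summand except $(\gradt u_t)u^i_t$, but $u_t$ is the \emph{same} factor in each summand, so $\tfrac1N\sum_i(\gradt u_t)u^i_t=(\gradt u_t)u_t=\tfrac12\grad|u_t|^2$ is again a gradient and goes into the pressure; with $\grad p_t:=\tfrac1N\sum_i\grad p^i_t$ this is the second equation of Lemma~\ref{lemma2.1}.

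The technical heart --- and the only delicate part --- is the stochastic calculus of the second paragraph: justifying the It\^o--Wentzell formula for these random, non--Markovian fields and correctly bookkeeping the second--order corrections that manufacture the viscous term $\nu\lap$ out of the martingale $\sqrt{2\nu}\,dB^i_t$, together with checking that $u_0\in\holderspace{k}{\alpha}$ with $k\ge 2$ gives enough regularity for $X^i_t$ to be a $C^1$ stochastic flow of diffeomorphisms (so that $Y^i_t$, $\gradt Y^i_t$, and hence $w^i_t$ are well defined and the above manipulations legitimate). All of this is the single--flow computation of~\cite{bblSLNS}; since the $N$ flows interact only through the scalar field $u_t$, nothing beyond it and the elementary averaging of the third paragraph is needed.
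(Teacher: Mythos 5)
The paper offers no proof of this lemma---it is quoted verbatim as Lemma~4.2 of~\cite{bblParticleMethod}, whose derivation is the single-flow computation of~\cite{bblSLNS}---and your argument is a correct reconstruction of exactly that route (It\^o--Wentzell for $Y^i_t$, the noiseless pathwise evolution of $(\gradt X^i_t)\inv u_0$, Leray projection, then averaging over $i$). The one step worth stating more carefully is the martingale term: $\lhp$ converts $\grad w^i_t\,dB^i_t$ into $\grad u^i_t\,dB^i_t$ because the residual $\grad\bigl(\grad q^i_t\cdot dB^i_t\bigr)$ is a pure gradient while $\grad u^i_t\,dB^i_t$ is already divergence free (as $\divergence u^i_t=0$), not merely because $\lhp$ ``commutes with $dB^i_t$.''
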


In contrast to the true Navier-Stokes equations~\eqref{NS1}-\eqref{NS2}  the particle system~\eqref{PrtlSys}-\eqref{SytmAvg}, for any finite $N$,  may not 
dissipate all of its energy as $t \rightarrow \infty$.  In two dimensions this was proven in~\cite{bblParticleMethod}. 
 In this work, we propose to alleviate this deficiency by considering a resetting scheme, in which we start by  
 solving the system~\eqref{PrtlSys}-\eqref{SytmAvg} on the interval $(t_0, t_1] $, where the resetting  time $t_1$ is going to be specified later according to our proposed resetting condition $\eqref{times}$
below. Next, we average our data, replace the 
original initial data with $u^{N}_{t_1}$, and we restart the system~\eqref{PrtlSys}-\eqref{SytmAvg} for the next time interval using this new initial data. We keep repeating this procedure on each interval   
 $(t_m, t_{m+1}]$  for $m \in \mathbb{N}$.

The resetting criterion comes from comparison of the rate of change of the energies of the  true Navier-Stokes equation and the particle system. Namely, the rate of change of the energy of our particle system~\eqref{PrtlSys}-\eqref{SytmAvg} is (see Theorem \ref{thm3} below) 
as follows.
\begin{align} \label{RateEnergy2}
  \frac{1}{2\nu} \del_t \E \lpnorm{u_t}{2}^2  &=  -  \frac{1}{N^2} \hspace{0.15 cm}  \sum_{i \neq j }^N     \E \left[  \langle \nabla u^{j}_t, \nabla u^{i}_t \rangle \right].
\end{align}
 Observe
 that the rate of change of energy depends on the average of  inner products of the gradients of $N$ velocities.
  In contrast, for the true Navier-Stokes equation the rate of change of the energy 
\begin{align}\label{RateNSEnergy}
  \frac{1}{2\nu} \del_t  \lpnorm{u_t}{2}^2  &=  - \lpnorm{ \nabla u_t }{2}^2.
\end{align}
For large $N$ the right-hand sides of~\eqref{RateEnergy2} and~\eqref{RateNSEnergy} are essentially\footnote{ If $\nabla u^{j}_t=\nabla u^{i}_t$, then the  right-hand side of~\eqref{RateEnergy2}
is $ (N-1) N^{-1} \lpnorm{ \nabla u_t }{2}^2 \to  \lpnorm{ \nabla u_t }{2}^2$, as $N \to \infty$.}
 the same, if $\nabla u^{j}_t=\nabla u^{i}_t$ for all $i$ and $j$. This observation motivates our approach. 
 We will use resetting to keep the sum of the expected value of the inner products 
 \begin{align} \label{EnergyCondition}
 \sum_{i \neq j }^N \E \left[  \langle \nabla u^{j}_t, \nabla u^{i}_t \rangle \right] \geq \hspace{0.05 cm}  c N^2 \hspace{0.05 cm} \E \lpnorm{u_t}{2}^2 
 \end{align}
 on each interval 
$(t_m, t_{m+1}]$  where $m \in \mathbb{N}$ and for some constant $c >0$ that  does not depend on $N$. This will make the inner products  in~\eqref{RateEnergy2} to be positive and thus the rate of energy dissipation will be negative on each interval $(t_m, t_{m+1}]$. 
Therefore we consider the following resetting system
\begin{align}\label{ResttingPrtlSys1}
  dX^{i}_{t } &= u_t \left( X^{i}_{t } \right) \, dt + \sqrt{2\nu}\,dB^i_t, \hspace{0.2 cm}  X^{i}_{t_m }(x_0)=x_0,~
  Y^{i}_{t }= \left( X^{i}_{t } \right)\inv,\\
 u_{t } &= \frac{1}{N} \sum_{i=1}^N u^{i}_t, \hspace{0.2 cm}   u^{i}_t = \lhp\left[ (\gradt Y^{i}_{t }) (u_{t_m} \circ Y^{i}_{ t }) \right],  \hspace{0.2 cm}  \text{for} \hspace{0.2 cm} 
 t \in (t_m, t_{m+1}],  \label{ResttingSytmAvg1}
\end{align}
where $m \in \mathbb{N}$, the set of non-negative integers, $t_0=0$, and the resetting times are defined recursively 
 \begin{equation} \label{times}
t_{m} = \inf \Big\{ t > t_{m-1}: \sum_{i \neq j }^N  \hspace{0.15 cm}     \E \left[  \langle \nabla u^i_t, \nabla u^j_t \rangle \right ]  < (1- \epsilon ) N(N-1)   \lpnorm{\nabla u_{t_{m-1} }}{2}^2     \Big\}. 
\end{equation}
for some positive fixed  $\epsilon <1$.
We say we reset the system at every $t =  t_m$, because we treat $u_{ t_m}$ as initial conditions for each of the 
intervals $t \in [ t_m, t_{m+1} )$. 

\begin{theorem}\label{thm3}
 Suppose we are in two dimensions.
 Let the initial condition $u_{0}$ be a  
  $\F_{0}$-measurable, periodic mean zero function such that the
  norm $\holdernorm{u_{0}}{1}{\alpha}$, $\alpha >0$ is almost surely
  bounded. If we let $\{t_m\}_{m=0}^{\infty}$ to be the sequence of resetting times defined in \eqref{times}, then the particle system with resetting   \eqref{ResttingPrtlSys1}-\eqref{ResttingSytmAvg1}  dissipates all its energy.
\end{theorem}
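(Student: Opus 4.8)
The plan is to reduce the statement to the decay of a single scalar, the energy $E(t):=\E\lpnorm{u_t}{2}^{2}$, and to show $E(t)\to0$ as $t\to\infty$. The starting point is the energy identity \eqref{RateEnergy2}, which one gets by applying It\^o's formula to $\tfrac12\lpnorm{u_t}{2}^{2}$ together with the SPDE for $u_t$ in Lemma~\ref{lemma2.1}: the transport and pressure terms drop by incompressibility, the stochastic integral has zero expectation, and the It\^o correction $\tfrac{\nu}{N^{2}}\sum_i\E\lpnorm{\grad u^i_t}{2}^{2}$ combines with $-\nu\E\lpnorm{\grad u_t}{2}^{2}=-\tfrac{\nu}{N^{2}}\sum_{i,j}\E\qv{\grad u^i_t}{\grad u^j_t}$ to leave precisely the off-diagonal sum in \eqref{RateEnergy2}. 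Since on each interval $(t_m,t_{m+1}]$ the resetting system \eqref{ResttingPrtlSys1}--\eqref{ResttingSytmAvg1} is a copy of \eqref{PrtlSys}--\eqref{SytmAvg} with $u_0$ replaced by $u_{t_m}$, Lemma~\ref{lemma2.1} and hence \eqref{RateEnergy2} remains valid there. Two elementary facts make $E$ a good global object: first, $u_t$ is continuous across the resetting times, because at $t=t_m$ the resetting merely replaces each $u^i$ by their common average $u_{t_m}$ and so does not change $u_t=\tfrac1N\sum_i u^i_t$; second, $u_t$ stays mean zero, since integrating the SPDE of Lemma~\ref{lemma2.1} over $\torus$ and using periodicity gives $\del_t\int_\torus u_t\,dx=0$.

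I would then extract decay from the resetting condition. By the definition of $t_m$ in \eqref{times}, the inequality $\sum_{i\ne j}\E\qv{\grad u^i_t}{\grad u^j_t}\ge(1-\epsilon)N(N-1)\,\E\lpnorm{\grad u_{t_{m-1}}}{2}^{2}$ holds on $(t_{m-1},t_m)$, so \eqref{RateEnergy2} gives $\del_t E(t)\le -2\nu\tfrac{(1-\epsilon)(N-1)}{N}\,\E\lpnorm{\grad u_{t_{m-1}}}{2}^{2}$ there, and the Poincar\'e inequality for mean-zero periodic fields, $\E\lpnorm{\grad u_{t_{m-1}}}{2}^{2}\ge\lambda_1 E(t_{m-1})$ with $\lambda_1>0$, upgrades this to $\del_t E(t)\le -c\nu E(t_{m-1})$, $c:=2(1-\epsilon)(N-1)\lambda_1/N>0$. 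In particular $E$ is non-increasing, so $E_\infty:=\lim_{t\to T_\infty}E(t)$ exists, where $T_\infty:=\lim_m t_m\in(0,\infty]$, and integrating over $(t_{m-1},t_m)$,
\[
 E(t_{m-1})-E(t_m)\ \ge\ c\nu\,\Delta_m\,E(t_{m-1})\ \ge\ c\nu\,\Delta_m\,E_\infty,\qquad \Delta_m:=t_m-t_{m-1}.
\]

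Now split into cases. If there are only finitely many resets, with the last at $t_M$, then by the definition of the infimum in \eqref{times} the resetting condition holds for all $t>t_M$, so $\del_t E(t)\le -c\nu E(t_M)$ for all such $t$; since $E\ge0$ this forces $E(t_M)=0$, and all the energy is gone. If there are infinitely many resets, summing the displayed estimate telescopes to $E(0)-E_\infty\ge c\nu E_\infty\sum_m\Delta_m=c\nu E_\infty T_\infty$. When $T_\infty=\infty$ this forces $E_\infty=0$, hence $E(t)\to0$ as $t\to\infty$. The remaining and delicate case is $T_\infty<\infty$, i.e.\ the resetting times accumulate; here I claim $E_\infty=0$ again. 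Suppose not; then by Poincar\'e $\E\lpnorm{\grad u_{t_m}}{2}^{2}\ge\lambda_1 E_\infty>0$ for every $m$. The decisive observation is that each reset restores a fixed multiplicative slack in the quantity $G(t):=\sum_{i\ne j}\E\qv{\grad u^i_t}{\grad u^j_t}$: immediately after $t_m$ all particles coincide with $u_{t_m}$, so $G(t_m^+)=N(N-1)\,\E\lpnorm{\grad u_{t_m}}{2}^{2}$, which exceeds the threshold $(1-\epsilon)N(N-1)\,\E\lpnorm{\grad u_{t_m}}{2}^{2}$ that triggers the next reset by the amount $\epsilon N(N-1)\,\E\lpnorm{\grad u_{t_m}}{2}^{2}$. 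Consequently, if $\abs{\del_t G}\le K$ on $[0,T_\infty]$ for some finite $K$, then $\Delta_{m+1}\ge \epsilon N(N-1)\,\E\lpnorm{\grad u_{t_m}}{2}^{2}/K\ge \epsilon N(N-1)\lambda_1 E_\infty/K>0$ for all $m$, contradicting $\Delta_m\to0$. Therefore $E_\infty=0$, so $u_{T_\infty}=0$, and the system restarted from zero data remains identically zero; all the energy is dissipated.

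The main obstacle is precisely the bound $\abs{\del_t G}\le K$ on $[0,T_\infty]$, uniform over the a priori infinitely many intervals. One obtains it by differentiating $G$ using the SPDE \eqref{Eqn1} of Lemma~\ref{lemma2.1}: for $i\ne j$ the martingale parts are driven by the independent Brownian motions $B^i$ and $B^j$, so there is no It\^o cross-correction and $\del_t\E\qv{\grad u^i_t}{\grad u^j_t}=\E\qv{\grad\,\del_t u^i_t}{\grad u^j_t}+\E\qv{\grad u^i_t}{\grad\,\del_t u^j_t}$ with the drifts read off from \eqref{Eqn1}; the resulting expression is quadratic in the $u^i_t$ and involves up to two derivatives, so it is controlled by higher Sobolev norms of the $u^i_t$. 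Here the two-dimensional hypothesis and the almost sure bound on $\holdernorm{u_0}{1}{\alpha}$ enter: the regularity theory for \eqref{ResttingPrtlSys1}--\eqref{ResttingSytmAvg1} in two dimensions propagates this bound to uniform control of the relevant norms on the compact interval $[0,T_\infty]$ — the resets only replace the data by the average $u_{t_m}$, which is no rougher — so $K$ may be taken independent of $m$. As a consistency check with the heuristic in the introduction, one can also note that the averaging step is itself dissipative for $\tfrac1N\sum_i\E\lpnorm{u^i_t}{2}^{2}$, which drops at each reset by $\tfrac1N\sum_i\E\lpnorm{u^i_{t_m}-u_{t_m}}{2}^{2}$; but the argument above runs entirely on $E$.
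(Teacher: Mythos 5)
Your reduction to the scalar $E(t)=\E\lpnorm{u_t}{2}^2$, the derivation of the energy identity \eqref{RateEnergy3}, and the treatment of the cases ``finitely many resets'' and ``$t_m\to\infty$'' (Poincar\'e plus Gronwall/telescoping) all match the paper's Case~I and are fine. The genuine gap is in the accumulation case $T_\infty=\lim_m t_m<\infty$. Your argument there hinges on a uniform Lipschitz bound $\abs{\del_t G}\le K$ on $[0,T_\infty]$ for $G(t)=\sum_{i\ne j}\E\qv{\grad u^i_t}{\grad u^j_t}$. Differentiating $G$ via \eqref{Eqn1} produces terms such as $\E\qv{\grad[(u_t\cdot\grad)u^i_t]}{\grad u^j_t}$, i.e.\ quantities involving two derivatives of $u^i_t$; the hypothesis only gives $u_0\in C^{1,\alpha}$, the underlying well-posedness theory for \eqref{eNS}--\eqref{eDivFree} is local in time, and you would need this control \emph{uniformly over infinitely many resetting intervals accumulating at $T_\infty$}. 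The assertion that ``regularity theory propagates this bound'' is exactly the hard, unproved step, and nothing in the paper (or in the cited well-posedness results) supplies it. As written, the contradiction argument in the delicate case does not close.

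The paper avoids this entirely by exploiting the two-dimensional structure in a way your proof never uses: for the stochastically advected vorticity $\omega^i_t=\curl u^i_t$, It\^o's formula shows the viscous dissipation cancels the It\^o correction, so $\lpnorm{\omega^i_t}{2}^2$ is \emph{conserved} on each interval $(t_m,t_{m+1}]$ and equals $\lpnorm{\grad u_{t_m}}{2}^2$ for every $i$. Consequently, at a reset time the averaging step gives
\begin{equation*}
\E\lpnorm{\omega_{t_{m}}}{2}^2=\frac{1}{N}\lpnorm{\omega_{t_{m-1}}}{2}^2+\frac{1}{N^2}\sum_{i\ne j}\E\bigl[\qv{\omega^i_{t_m}}{\omega^j_{t_m}}\bigr]\le\Bigl(1-\epsilon+\frac{\epsilon}{N}\Bigr)\lpnorm{\omega_{t_{m-1}}}{2}^2,
\end{equation*}
because the reset is triggered precisely when the correlation sum drops below $(1-\epsilon)N(N-1)\lpnorm{\omega_{t_{m-1}}}{2}^2$. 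Iterating gives geometric decay of the enstrophy over resets, hence (by Poincar\'e) of the energy, with no regularity beyond the hypotheses and no bound on $\del_t G$. This conservation law is also the real reason the theorem assumes $d=2$; your proposal attributes the role of that hypothesis to regularity theory, which misses the mechanism. Your closing remark that the averaging step is dissipative is the right instinct --- the paper's proof makes that observation quantitative at the level of the enstrophy rather than the energy.
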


We remark that the particle system with resetting ~\eqref{ResttingPrtlSys1}-\eqref{ResttingSytmAvg1} dissipates its energy using two mechanisms. It dissipates energy exponentially anytime 
the inequality~$\eqref{EnergyCondition}$ holds, and  it dissipates energy when we average our data each time we reset our system. 
We also want to highlight that the manner in which we defined our resetting times in ~\eqref{times} causes the length of time increments $\delta_m= t_m-t_{m-1}$
 to vary among resetting intervals. This gives rise to the case that if the sequence of time increments $\delta_m $ decays to zero too fast, then the limit of the sequence of resetting times $t_m \rightarrow T$, for some constant 
 time $T$. Thus, we have to consider two cases. First case is when the limit of sequence of resetting times $t_m \rightarrow \infty$, as $m \to \infty$. In this case we show that the energy of the particle system with 
 resetting~\eqref{ResttingPrtlSys1}-\eqref{ResttingSytmAvg1} dissipates its energy mainly exponentially. The second case is when the limit of the sequence 
 of resetting times $t_m \rightarrow T$, for some finite time $T$. 
 In this case, we show that the system ~\eqref{ResttingPrtlSys1}-\eqref{ResttingSytmAvg1} dissipates its energy mainly by averaging  each time we reset.

\section{ Energy decay by resetting} \label{Energy}

\begin{proof}[Proof of Theorem~\ref{thm3}]

By Lemma \ref{lemma2.1}, 
$u^{i}_t$ satisfies the SPDE
\begin{equation*}
  d u^{i}_t + \left[ (u_t \cdot \grad) u^{i}_t - \nu \lap
    u^{i}_t + (\gradt u_t) u^{i}_t +  \grad p^{i}_t\right] 
  dt   + \sqrt{2\nu} \grad u^{i}_t dB^i_t = 0, 
\end{equation*}
on each interval $t \in  § (t_m, t_{m+1}]$.
In two dimensions, the vorticity  $\omega^i_t = \nabla \times u^i_t$ solves 
\begin{eqnarray*}
d \omega^i_t+ [ (u_t \cdot \nabla) \omega^i -\nu \Delta \omega^i_t] dt +\sqrt{2 \nu} \nabla \omega^i_t dB^i_t=0.
\end{eqnarray*}
By  It\^o's formula, we  obtain 
\begin{multline*}
  \frac{1}{2} d \abs{\omega^i_t}^2 + \omega^i_t \cdot \left[ (u_t \cdot \grad) \omega^i_t  - \nu
    \lap \omega^i_t   \right] \, dt 
  + \sqrt{2 \nu} \omega^i_t \cdot (\grad \omega^i_t\, dB^i_t) -
  \nu \abs{\grad \omega^i_t}^2 \, dt=0
\end{multline*}
Integrating in space and using the fact that $u^i_t$ and $u_t$ are divergences free, 
we have   $ d \lpnorm{\omega^i_t}{2}^2=0$ for all  $ t \in [t_m, t_{m+1})$. Thus the norm of all the vorticities is preserved on such  time-intervals. 
Since   $\lpnorm{\omega^i_t}{2}^2 =\lpnorm{\nabla u^i_t}{2}^2$ we have
\[
\lpnorm{\nabla u_{t_m}}{2}^2 =\lpnorm{\nabla u^1_t}{2}^2=\lpnorm{\nabla u^2_t}{2}^2=\dots=\lpnorm{\nabla u^N_t}{2}^2
\]
for $t \in  § (t_m, t_{m+1}]$.

Using Lemma \ref{lemma2.1} and  It\^o's formula, we also have  
\begin{multline*}
  \frac{1}{2} d \abs{u_t}^2 + u_t \cdot \left[ (u_t \cdot \grad) u_t - \nu
    \lap u_t + \grad p \right] \, dt 
  + \frac{\sqrt{2 \nu}}{N} \sum_{i=1}^N u_t \cdot (\grad u^i_t\, dB^i_t) \\=
  \frac{\nu}{N^2} \sum_{i=1}^N \abs{\grad u^i_t}^2 \, dt
\end{multline*}
Integrating in space and taking expected values, we obtain
\begin{align} \label{RateIto1}
  \frac{1}{2\nu} \del_t \E \lpnorm{u_t}{2}^2 &= \E \left[
    \frac{1}{N^2} \sum_{i=1}^N \lpnorm{\grad u^i_t}{2}^2 -
    \lpnorm{\grad u_t}{2}^2 \right]\\  \label{RateIto2}
  &= \E \left[ \frac{1}{N^2} \sum_{i=1}^N \lpnorm{\grad u^i_t}{2}^2 - \frac{1}{N^2} \bigg [ \sum_{i=1}^N \| \nabla u^i_t \|^2_{L^2}+   \sum_{i \neq j }^N  \langle \nabla u^j_t, \nabla u^i_t \rangle \bigg ] \right ]
\end{align}
This simplifies to
\begin{align} \label{RateEnergy3}
  \frac{1}{2\nu} \del_t \E \lpnorm{u_t}{2}^2 &= -  \frac{1}{N^2} \hspace{0.15 cm}  \sum_{i \neq j }^N   \E \left[  \langle \nabla u^j_t, \nabla u^i_t \rangle \right ]  
\end{align}

\textit{Case I:} $\lim_{m \to \infty} t_m \rightarrow \infty $. In this case the sequence of resetting times goes to infinity. Using resetting, we have 
\begin{align} \label{ResettingCondtion}
\sum_{i \neq j }^N  \hspace{0.15 cm}     \E \left[  \langle \nabla u^i_t, \nabla u^j_t \rangle \right ]    \geq (1- \epsilon ) N(N-1) \lpnorm{\nabla u_{t_{m} }}{2}^2  
\end{align}  
for all $t \in [t_m, t_{m+1})$. Thus, using $\eqref{ResettingCondtion} $ we can obtain the following estimate on the rate of change of energy $\eqref{RateEnergy3}$ 
\[
  \frac{1}{2\nu} \del_t \E \lpnorm{u_t}{2}^2   \leq - (1- \epsilon )\frac{N-1}{N} \lpnorm{\nabla u_{t_m }}{2}^2
  \]
\[
\leq -C(1- \epsilon )\frac{N-1}{N} \E \lpnorm{u_{t_m }}{2}^2 \leq -C(1- \epsilon )\frac{N-1}{N} \E \lpnorm{u_{t }}{2}^2
\]
for some constant  $C>0$ that arises from using   Poincar\'{e}'s  inequality.  Using the Gronwall's inequality, we obtain exponential dissipation of energy.
 
\textit{Case II:} $ \lim_{m \to \infty} t_m \rightarrow T $. In this case the sequence of resetting times converges to a finite time $T$. At every resetting time we have
\begin{align*}
 \omega_{t_{m}   }=   \frac{1}{N} \hspace{0.15 cm}  \sum_{i=1 }^N  \omega^i_{t_{m}}. 
\end{align*}
Thus,  
\[
\E \lpnorm{ \omega_{t_{m}}}{2}^2=  \frac{1}{N^2}    \sum_{i=1 }^N \E \lpnorm{\omega^i_{t_{m}}}{2}^2   + \frac{1}{N^2} \sum_{i \neq j }^N \E \left[   \langle    \omega^i_{t_{m}} ,   \omega^j_{t_{m}}  \rangle \right]
\]
\[
= \frac{1}{N}       \lpnorm{\omega_{t_{m-1}}}{2}^2   + \frac{1}{N^2} \sum_{i \neq j }^N \E \left[   \langle    \omega^i_{t_{m}} ,   \omega^j_{t_{m}}  \rangle \right]. \label{AverageL2norm}
 \]
 Since  $\langle \nabla u^i_{t_{m}}, \nabla u^j_{t_{m}} \rangle =\langle \omega^i_{t_{m}}, \omega^j_{t_{m}} \rangle$,  we have 
\begin{align} \label{ResettingCondtion2}
\sum_{i \neq j }^N  \hspace{0.15 cm}     \E \left[  \langle \omega^i_{t_{m}}, \omega^j_{t_{m}} \rangle \right ]    \leq  (1- \epsilon ) N(N-1)  \lpnorm{\omega_{t_{m-1} }}{2}^2  
\end{align}  
\begin{align*} 
\E \lpnorm{ \omega_{t_{m}}}{2}^2 \leq \big( 1 -\alpha \big) \lpnorm{\omega_{t_{m-1} }}{2}^2, \alpha =1 -\epsilon+\frac{\epsilon}{N} <1.  
 \end{align*}
 Iterating over $m$, we have 
 \begin{align*} 
\E \lpnorm{ \omega_{t_{m}}}{2}^2 \leq \big( 1 -\alpha \big)^m \lpnorm{\omega_{0 }}{2}^2, \hspace{0.1 cm} \text{where} \hspace{0.1 cm}  \omega_0 = \nabla \times u_0.  
 \end{align*}
 Thus, if $\lim_{m \to \infty} t_m \rightarrow T$, for some finite time $T$,  this means we are going to reset our particle system a countable number of times. Hence, 
  \begin{align*} 
\E \lpnorm{ \omega_{t_{m}}}{2}^2 \leq \big( 1 -\alpha \big)^m \lpnorm{\omega_{0}}{2}^2   \rightarrow 0\hspace{0.2 cm } \text{as} \hspace{0.2 cm } m \rightarrow \infty. 
 \end{align*}
Thus, the particle system dissipates all its energy in a finite time $T$.

\end{proof}

\end{document}